\newtheorem{thm}[subsubsection]{Theorem}
\newtheorem{lem}[subsubsection]{Lemma}
\newtheorem{prp}[subsubsection]{Proposition}
\newtheorem*{Lem}{Lemma}
\theoremstyle{definition}
           \newtheorem{dfn}[subsubsection]{Definition}
           \newtheorem{exm}[subsubsection]{Example}
           \newtheorem{rem}[subsubsection]{Remark}
           \newtheorem{notation}[subsubsection]{Notation}
\newcommand{\Free}{\mathit{Free}}
\newcommand{\id}{\mathrm{id}}
\renewcommand{\int}{\mathit{int}}
\mathchardef\mhyphen="2D
\newcommand{\Sk}{\mathtt{Sk}} 
\newcommand{\Span}{\operatorname{Span}}
\newcommand{\bR}{\mathbb{R}}
\newcommand{\bV}{\mathbb{V}}
\newcommand{\bW}{\mathbb{W}}
\newcommand{\cP}{\mathcal{P}}
\newcommand{\cR}{\mathcal{R}}
\newcommand{\fg}{\mathfrak{g}}
\newcommand{\fh}{\mathfrak{h}}
\newcommand{\Z}{\mathbb{Z}}
\newcommand{\R}{\mathbb{R}}
\begin{document}

\title[]{Matsumoto theorem for skeleta}
\author{M.~Gorelik}
\address{Weizmann Institute of Science}
\email{maria.gorelik@gmail.com}
\author{V.~Hinich}
\address{University of Haifa}
\email{vhinich@gmail.com}
\author{V. Serganova}
\address{UC Berkeley}
\email{serganov@math.berkeley.edu}
 
\begin{abstract} 
We present a proof of  a generalization of the theorem of
H.~Matsumoto on Coxeter groups. Our generalized version is applicable to
``graphs admitting geometric realization''. The original version of the theorem for Coxeter groups is a special case when applied to the Cayley graph and the geometric representation of a Coxeter group.
Our version of Matsumoto theorem is also applicable to 
{\sl skeleta}, graphs that were defined in the recent paper~\cite{GHS} 
on root Lie superalgebras.
\end{abstract}
\maketitle

\section{Introduction}

\subsection{}
Let $(G,S)$ be a Coxeter group.
Recall that this means that 
$$
G=\Free_S/((st)^{m_{st}}),
$$
the factor of the free group generated by $S$, where $M=(m_{st})$
is a symmetric matrix over $\Z$ satisfying the conditions
$m_{ss}=1,\ m_{st}>1$ for $s\ne t$.
A reduced decomposition of $g\in G$ is a shortest expression
$$
g=s_1\ldots s_n, \ s_i\in S
$$
and we denote by $\ell(g)$ the length of a reduced decomposition of $g$.

It is convenient to rewrite the relations as follows
$$
\begin{cases}
s^2=1, & s\in S \\
sts\ldots =tst\ldots \text{ (both expressions of length }m_{st}),&
  s\ne t\in S.
\end{cases}
$$
 
The second equality is called the generalized braid (also Artin-Tits) relation.  
We define an equivalence relation (called {\sl the braid relation})
on the set of words in the alphabet $S$
as the minimal relation closed under concatenation and containing
the Artin-Tits relation.

The following result is proven by Matsumoto~\cite{M} in 1964.

\begin{thm}
\label{thm:matsumoto-0}
Let $g=s_1\ldots s_n=t_1\ldots t_n$ be two reduced decompositions of $g$,
with $s_i$ and $t_j$ in $S$. Then the sequences $(s_1,\ldots,s_n)$
and $(t_1,\ldots,t_n)$ are braid equivalent.
 \end{thm}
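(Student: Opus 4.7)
\medskip

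\noindent\textbf{Proof plan.} The plan is to argue by induction on $n = \ell(g)$, the cases $n \leq 1$ being trivial. Suppose $g = s_1 \ldots s_n = t_1 \ldots t_n$ are two reduced decompositions with $n \geq 2$.

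If $s_1 = t_1$, then $s_2 \ldots s_n$ and $t_2 \ldots t_n$ are reduced decompositions of $s_1 g$ of length $n-1$, which are braid-equivalent by the inductive hypothesis; concatenating with $s_1 = t_1$ yields the claim. The substantive case is $s := s_1 \neq t_1 =: t$. Here I would invoke two classical consequences of the faithful action of $(G,S)$ on its geometric representation: the strong exchange condition, and the fact that every left coset of the dihedral parabolic $G_{st} := \langle s, t \rangle$ admits a unique representative $v$ of minimal length, along which left multiplication by elements of $G_{st}$ is length-additive. Since both $\ell(sg) < \ell(g)$ and $\ell(tg) < \ell(g)$, writing $g = u v$ with $u \in G_{st}$ forces $u$ to be the longest element of $G_{st}$; in particular $m := m_{st} < \infty$, and $u$ admits exactly the two reduced decompositions $\underbrace{sts\ldots}_{m}$ and $\underbrace{tst\ldots}_{m}$.

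Fixing any reduced decomposition $v = u_1 \ldots u_{n-m}$ we obtain two length-$n$ reduced decompositions of $g$,
\begin{align*}
E  &= (\,\underbrace{s,t,s,\ldots}_{m}\,,\, u_1, \ldots, u_{n-m}\,), \\
E' &= (\,\underbrace{t,s,t,\ldots}_{m}\,,\, u_1, \ldots, u_{n-m}\,),
\end{align*}
that are braid-equivalent via a single Artin--Tits move. The sequences $(s_1,\ldots,s_n)$ and $E$ both start with $s$, so their tails are reduced decompositions of $sg$ of length $n-1$ and are braid-equivalent by the inductive hypothesis; concatenating with $s$ gives $(s_1,\ldots,s_n) \sim E$. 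Analogously $(t_1,\ldots,t_n) \sim E'$, and chaining the three braid equivalences finishes the argument.

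The main obstacle is the structural lemma describing the dihedral parabolic coset $G_{st} g$: the existence of a minimal-length representative with length-additive multiplication, and the identification of the "doubly descending" element as the longest element of $G_{st}$. In the classical setting this rests on the Tits cone and the combinatorics of positive roots under the geometric representation; in the generalized framework of graphs admitting geometric realization targeted by the paper, it is precisely this lemma that must be reformulated and proved from the axioms, the remainder of the argument being formal.
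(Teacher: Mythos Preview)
Your argument is correct for Coxeter groups and shares the paper's inductive skeleton (Section~\ref{sec:proof}): induct on $n$, dispose of the same-first-letter case by induction on tails, and in the case $s_1\ne t_1$ produce a dihedral ``bridge'' --- two braid-equivalent reduced words for $g$ beginning with $s$ and with $t$ respectively --- then apply induction once more on each side.

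The difference is in how the bridge is built. You use the parabolic coset decomposition $g=uv$ with $u\in G_{st}$ and $v$ the minimal coset representative, and the double-descent condition $\ell(sg),\ell(tg)<\ell(g)$ to force $u$ to be the longest element of $G_{st}$ (whence $m_{st}<\infty$). The paper instead first performs a separate inductive reduction to the extremal case $\alpha_1=\beta_n$, $\beta_1=\alpha_n$, and only then constructs the rank-$2$ subgraph $\Gamma(v,-\beta_1,\alpha_1)$ directly from the axioms; its finiteness follows because $C(\alpha_1,\beta_1)\cap R\subset R^+_{v'}\cap R^-_v$, which is finite. The opposite vertex $v''$ of this polygon and the two half-paths to it play exactly the role of your factorization $g=uv$ and the two reduced words for the longest element $u$. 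What the paper's route buys is a proof that uses only the axioms of a geometric realization --- in particular it explicitly avoids the exchange/replacement property --- and therefore applies verbatim to arbitrary Matsumoto graphs such as skeleta. Your coset/longest-element lemma, as you rightly flag, is precisely the Coxeter-specific ingredient that does not transport, and the paper's preliminary reduction plus the rank-$2$ subgraph construction is its replacement.
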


The original proof of Matsumoto theorem is based on the following
{\sl replacement property}.

\begin{Lem}
Let $g=s_1\ldots,s_n$ be a reduced decomposition and let $s\in S$
satisfy the condition
$$
\ell(sg)<n.
$$
Then there exists $j$ such that
$$
ss_1\ldots s_n=s_1\ldots s_{j-1}s_{j+1}\ldots s_n.
$$
\end{Lem}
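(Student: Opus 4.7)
The plan is to prove this exchange (replacement) property via the geometric representation of the Coxeter pair $(G,S)$. Let $V=\bigoplus_{s\in S}\bR\alpha_s$ carry the symmetric bilinear form $B$ specified by $B(\alpha_s,\alpha_t)=-\cos(\pi/m_{st})$ (with $-\cos(\pi/\infty)$ read as $-1$), and let each $s\in S$ act on $V$ by the reflection $v\mapsto v-2B(\alpha_s,v)\alpha_s$. First I would establish the foundational facts: the assignment extends to a well-defined action $G\to\mathrm{GL}(V)$; the orbit $\Phi=G\cdot\{\alpha_s:s\in S\}$ splits as a disjoint union $\Phi^+\sqcup\Phi^-$ of roots having nonnegative, respectively nonpositive, coefficients in the basis $\{\alpha_s\}$; and every simple reflection $s$ permutes $\Phi^+\setminus\{\alpha_s\}$ while sending $\alpha_s$ to $-\alpha_s$. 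An immediate corollary is the length formula $\ell(w)=|w(\Phi^+)\cap\Phi^-|$, from which the hypothesis $\ell(sg)<n$ is equivalent to $g^{-1}(\alpha_s)\in\Phi^-$.

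Granted these facts, I would form the sequence of roots
$$
\beta_0=\alpha_s,\qquad \beta_k=(s_ks_{k-1}\cdots s_1)(\alpha_s)\ \text{for } k=1,\ldots,n,
$$
so that $\beta_0\in\Phi^+$ while $\beta_n=g^{-1}(\alpha_s)\in\Phi^-$. Let $j$ be the least index with $\beta_j\in\Phi^-$; then $\beta_{j-1}\in\Phi^+$ and $s_j\beta_{j-1}=\beta_j\in\Phi^-$, which by the third foundational fact forces $\beta_{j-1}=\alpha_{s_j}$, i.e.\ $(s_1\cdots s_{j-1})^{-1}(\alpha_s)=\alpha_{s_j}$. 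Using the identity $ws_\alpha w^{-1}=s_{w(\alpha)}$, this translates into $(s_1\cdots s_{j-1})^{-1}\,s\,(s_1\cdots s_{j-1})=s_j$, whence $s\cdot s_1\cdots s_{j-1}=s_1\cdots s_j$. Right-multiplying both sides by $s_js_{j+1}\cdots s_n$ and cancelling via $s_j^2=1$ produces the desired equality $sg=s_1\cdots s_{j-1}s_{j+1}\cdots s_n$.

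The main obstacle is the first paragraph: setting up the geometric representation and proving that $\Phi=\Phi^+\sqcup\Phi^-$ exhaustively and that each simple reflection flips the sign of only its own simple root among the positive roots. This is the classical and somewhat delicate inductive argument which reduces the crucial sign-tracking step to rank-two (dihedral) parabolic subgroups, and it conceals essentially all of the work hiding behind the exchange condition; once it is in place the rest of the proof, as sketched above, is a short computation.
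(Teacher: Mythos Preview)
Your argument is correct and is essentially the classical proof of the exchange condition via the geometric representation, as in Humphreys~\cite{H}, \S5.8 (which the paper itself cites). However, there is nothing to compare it against: the paper does \emph{not} prove this lemma. It is quoted in the introduction only as historical background---it underlies Matsumoto's original 1964 argument---and the authors explicitly remark that their approach ``does not use replacement property.'' So your write-up stands on its own as a standard self-contained proof, but it is neither the same as nor different from the paper's proof, because the paper offers none.

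One small expository point: the equivalence ``$\ell(sg)<\ell(g)\iff g^{-1}(\alpha_s)\in\Phi^-$'' is more directly a consequence of the sign-change criterion (your ``third foundational fact'') than of the length formula $\ell(w)=|\,w(\Phi^+)\cap\Phi^-|$; the latter is a corollary of the former rather than the other way around. This does not affect correctness.
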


\subsection{}
In a recent paper~\cite{GHS} the authors introduced a family of Lie superalgebras (root Lie superalgebras)
generalizing Kac-Moody superalgebras and Borcherds Lie algebras. 
The set of Borel subalgebras of a root Lie superalgebra can be described 
in terms of a certain graph called a skeleton. The skeleton plays the role 
of the Cayley graph of a Weyl group. A version of Matsumoto theorem 
can be formulated for these graphs but the 
replacement property does not seem to hold. This led us to an 
attempt to generalize Matsumoto theorem so that it would cover the case
of skeleta. 

Our solution turns out to be very simple: it gives a proof that is,
in our opinion, easier even for the Coxeter groups. Note that our proof
does not use replacement property. Our approach is somewhat similar (though not completely equivalent) to that of 
Heckenberger-Yamane~\cite{HY}.
It seems that validity of the Matsumoto theorem for skeleta in the
fully reflectable case could have been deduced from ~\cite{HY} and 
the Coxeter property of skeleta proven in~\cite{GHS}, Sect.~6. 

Matsumoto graphs have a dual presentation in terms of convex geometry of
hyperplanes arrangements, see~\ref{ss:dual}. In the case of finite 
crystallographic arrangements a connection to Weyl groupoids was studied by M.~Cuntz \cite{Cu}.

\section{Matsumoto graphs}

\subsection{Rays and cones}
Our basic geometric object will consist of closed rays in a real vector space 
$\bV$.

For a collection of rays $\underline\alpha=(\alpha_1,\ldots,\alpha_k)$ their sum
$C(\underline\alpha)=C(\alpha_1,\ldots,\alpha_k)$ is the convex cone generated by $\alpha_i$. For a
collection $\underline\alpha=(\alpha_i,\ i=1,\ldots,\dim\bV)$ of linearly independent rays
the cone $C(\underline\alpha)$ will be called the {\sl simplicial cone}
spanned by $\alpha_i$.

Note that the rays spanning a simplicial cone $C$ are uniquely 
determined by $C$.

\subsection{Graphs and their realization}
Our graphs $\Gamma=(V,E)$ are connected. 
Their edges may be compact (having two ends) or noncompact
(infinite, connected to only one vertex), and we write $E=E_c\sqcup E_\infty$.

We think of compact edges as the 
ones allowing both orientations, whereas infinite edges are always incoming. We define the set of oriented edges
$\stackrel{\to}{E}=\stackrel{\to}{E_c}\sqcup E_\infty$ where 
$\stackrel{\to}{E_c}$ is a two-fold covering of $E_c$ that we identify
with $E_c\times\{\pm 1\}$. Thus, 
$$
\stackrel{\to}{E}=\{(e,\epsilon)\in E\times\{\pm 1\}|\ \epsilon=1
\mathrm{\ if\ }e\in E_\infty\}.
$$

A geometric realization of a graph $\Gamma$ is an assignment
of a ray $\alpha_{e,\epsilon}$ to each oriented edge $(e,\epsilon)$ so that
$\alpha_{e,-\epsilon}=-\alpha_{e,\epsilon}$ for $e\in E_c$. 
If $e\in E_\infty$, we will write $\alpha_e$ instead of $\alpha_{e,1}$.

We define the set of roots $R=\{\alpha_{e,\epsilon}\}$ and we require
the following axioms.
\begin{itemize}
\item[1.] If $e\in E_\infty$  then $-\alpha_e\not\in R$.
\item[2.] For any $v\in V$ the set of roots $\alpha_1,\ldots,\alpha_d$ 
assigned to the edges ingoing to $v$, form a basis of $\bV$. In particular, the graph $\Gamma$ is regular of degree $d=\dim\bV$.
We denote $R^+_v=\{\alpha\in R|\ \alpha\in C(\alpha_1,\ldots,\alpha_d)\}$.

\item[3.] For each $(e,\epsilon):v\to v'$ one has
$$
R^+_{v'}\setminus R^+_v=\{\alpha_{e,\epsilon}\}.
$$
Note that this condition implies that a graph having  a geometric realization has no loops $e:v\to v$ and no multiple edges.
\item[4.] If $R^+_v=R^+_{v'}$ then $v=v'$.
\end{itemize}

\begin{dfn}
A Matsumoto graph is a connected graph endowed with a geometric realization.
\end{dfn}

Note that a root $\alpha$ corresponds to a compact edge iff $-\alpha\not\in R$. Having this in mind,  the roots
corresponding to compact edges will be called {\sl invertible} and the
roots of infinite edges will be called {\sl non-invertible}.

Our generalization of Matsumoto theorem is formulated in terms of
Matsumoto graphs. It claims that any two shortest paths from one vertex
to another in a Matsumoto graph are braid equivalent. The result is
formulated only in~\ref{thm:matsumoto} as the braid equivalence requires
a certain effort to define. 

\subsection{Basic properties}

The property below immediately follows from the axioms.
\begin{lem}
\begin{itemize}
\item[1.] For any $e\in E_\infty$ and any $v\in V$ one has $\alpha_e\in R^+_v$.
\item[2.] $R\subset R^+_v\sqcup(-R^+_v)$. 

\end{itemize}
\end{lem}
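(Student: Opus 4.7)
The plan is to prove (2) by induction on graph distance and then extract (1) as a one-line corollary.

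\emph{Part 1 from Part 2.} Given $e\in E_\infty$ and any $v\in V$, Axiom~1 says $-\alpha_e\notin R$; since $R^+_v\subseteq R$ this yields $-\alpha_e\notin R^+_v$, and the disjunction in Part~2 forces $\alpha_e\in R^+_v$.

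\emph{Part 2.} The disjointness $R^+_v\cap(-R^+_v)=\emptyset$ is automatic, since the simplicial cone $C_v$ meets its opposite only at the origin, which is not a root. For the inclusion $R\subseteq R^+_v\cup(-R^+_v)$, fix $\alpha\in R$; writing $\alpha=\alpha_{e,\epsilon}$ for the oriented edge $(e,\epsilon):u\to v_1$, Axiom~2 gives $\alpha\in R^+_{v_1}$. I argue by induction on $k=d(v,v_1)$ (graph distance), the base case $k=0$ being immediate. For the inductive step, pick a neighbor $w$ of $v$ with $d(w,v_1)=k-1$, so by the inductive hypothesis $\alpha\in R^+_w\cup(-R^+_w)$; let $f$ be the edge joining $w$ to $v$.

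If $f$ is compact, Axiom~3 applied to the two orientations of $f$ pins down the symmetric difference as
\[
R^+_v\setminus R^+_w=\{\alpha_{f,\delta}\},\qquad R^+_w\setminus R^+_v=\{-\alpha_{f,\delta}\},
\]
and a routine case check transfers the conclusion to $v$: either $\alpha$ lies in $R^+_w\cap R^+_v$, or $\alpha=\pm\alpha_{f,\delta}$ and its negative lies in $R^+_v$. If $f$ is infinite, only one orientation exists and Axiom~3 controls only one of the two set differences. The missing direction is closed by invoking Axiom~1 together with a short geometric inspection of the simplicial cones $C_w$ and $C_v$: the non-invertibility $-\alpha_f\notin R$ rules out the sign-flip scenarios available in the compact case, and in the remaining possibility $\alpha=\alpha_f$ one appeals directly to the basis structure at the endpoint into which $f$ points.

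The principal obstacle is precisely this infinite-edge transition: the axioms are manifestly symmetric between the two orientations only for compact edges, so one must balance the asymmetry by borrowing the non-invertibility assertion of Axiom~1. This is also why Part~1 is stated separately—its content (that non-invertible roots lie in every positive system) is the very ingredient needed to close the infinite-edge step of the induction.
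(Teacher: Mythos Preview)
Your overall strategy---prove (2) by induction on graph distance using Axiom~3, then deduce (1) from (2) via Axiom~1---is correct and is essentially the argument the paper has in mind (the paper simply declares the lemma ``immediately follows from the axioms'' and gives no details).

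There is, however, a misunderstanding in your inductive step that you should clean up. You worry about the case where the edge $f$ joining the neighbors $w$ and $v$ is infinite, and you sketch an ad hoc fix that appears to invoke the content of Part~1 inside the proof of Part~2 (which would be circular, since you derive Part~1 from Part~2). This case simply does not occur: by definition an infinite edge is ``connected to only one vertex'', so it cannot join two distinct vertices $w$ and $v$. Any edge realizing $d(w,v)=1$ is compact, and your clean case analysis for compact $f$---using both orientations in Axiom~3 to get $R^+_v\setminus R^+_w=\{\alpha_{f,\delta}\}$ and $R^+_w\setminus R^+_v=\{-\alpha_{f,\delta}\}$---already finishes the induction. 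Once you drop the spurious infinite-edge case, the argument is complete and the circularity evaporates.
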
\qed

We define $R^-_v:=R\setminus R^+_v\subset -R^+_v$.
\begin{lem}
\label{lem:positives-differ}
If $R^+_v\subset R^+_{v'}$ then $v=v'$.
\end{lem}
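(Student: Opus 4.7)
The plan is to argue by contradiction: assume $R^+_v \subsetneq R^+_{v'}$ and exhibit a root that lies simultaneously in $R^+_{v'}$ and in $-R^+_{v'}$, contradicting the disjointness asserted by the preceding lemma (Lemma~2.4.1, part~2, applied at $v'$). The conclusion $v=v'$ then follows from axiom~4.

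First I would pick any $\alpha\in R^+_{v'}\setminus R^+_v$, which exists by the strict inclusion. The goal is to show that $\alpha$ is invertible, i.e.\ $-\alpha\in R$. This is immediate from Lemma~2.4.1, part~1: every non-invertible root $\alpha_e$ (with $e\in E_\infty$) lies in $R^+_w$ for \emph{every} vertex $w$, so in particular in $R^+_v$; thus a root omitted from $R^+_v$ must be invertible.

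Next, since $\alpha\in R$ and $\alpha\notin R^+_v$, Lemma~2.4.1, part~2, applied at $v$ forces $\alpha\in -R^+_v$, i.e.\ $-\alpha\in R^+_v$. Using the hypothesis $R^+_v\subset R^+_{v'}$, we conclude $-\alpha\in R^+_{v'}$. But we also have $\alpha\in R^+_{v'}$ by the choice of $\alpha$, which contradicts the disjointness of $R^+_{v'}$ and $-R^+_{v'}$ guaranteed by Lemma~2.4.1, part~2 at $v'$ (or, equivalently, by the pointedness of the simplicial cone spanning $R^+_{v'}$). Hence no such $\alpha$ exists, so $R^+_v=R^+_{v'}$, and axiom~4 gives $v=v'$.

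There is no real obstacle here: the argument is a three-line chase using only the two basic properties already recorded in Lemma~2.4.1. The only thing to be careful about is distinguishing invertible from non-invertible roots at the outset, since the lemma $R\subset R^+_v\sqcup(-R^+_v)$ provides no information about roots whose negative is not in $R$. Lemma~2.4.1(1) is precisely what handles those, and it is what rules out the situation in which $R^+_{v'}\setminus R^+_v$ could consist of a non-invertible root already automatically in $R^+_v$.
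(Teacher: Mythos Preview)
Your argument is correct, but it differs from the paper's. The paper argues via \emph{relative cardinality}: connectedness together with Axiom~3 (applied to each edge in both directions) shows that $|R^+_{v'}\setminus R^+_v|=|R^+_v\setminus R^+_{v'}|$ for every pair of vertices, so an inclusion forces equality and Axiom~4 finishes. Your route instead leverages Lemma~2.4.1(2) directly: any $\alpha\in R^+_{v'}\setminus R^+_v$ must lie in $-R^+_v\subset -R^+_{v'}$, contradicting the pointedness of the simplicial cone at $v'$. This is arguably cleaner for the lemma at hand, since it avoids the path-counting step; the paper's approach, on the other hand, sets up the cardinality machinery that is reused immediately afterwards in Lemma~\ref{lem:short}. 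One minor remark: your detour through Lemma~2.4.1(1) to establish invertibility of $\alpha$ is unnecessary---once Lemma~2.4.1(2) gives $\alpha\in -R^+_v$, you already have $-\alpha\in R^+_v\subset R$, so invertibility is automatic and the final paragraph's caveat is moot.
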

\begin{proof}
Since the graph is connected, the difference between any pair of sets $R^+_v$ and $R^+_{v'}$ is finite.
Therefore, we can talk about relative cardinality of these sets.
A passage from a vertex to its neighbor along an edge does not change
the cardinality. Therefore, the relative cardinality is always zero.
Thus, by the assumption $R^+_v=R^+_{v'}$ and by Axiom 4 $v=v'$.
\end{proof}

For vertices $v,v'$ we define the distance $d(v,v')$ as the length of a shortest path connecting $v$ with $v'$. 

\begin{lem}
\label{lem:short}
Any path from $v$ to $v'$ has length equal to $d(v,v')$ modulo 2.
$d(v,v')$ is the cardinality of $R^+_{v'}\setminus R^+_v$.
In particular, any closed path has an even cardinality; that is,
any graph admitting a geometric realization, is bipartite.
\end{lem}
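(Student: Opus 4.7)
The approach is to identify $d(v,v')$ with $|R^+_{v'}\setminus R^+_v|$; both the parity statement and bipartiteness then follow. The key starting observation is that a single edge traversal flips only one root-pair: for $(e,\epsilon):u\to u'$, Axiom 3 applied to $(e,\epsilon)$ and to the reverse $(e,-\epsilon)$ yields $R^+_{u'}\setminus R^+_u=\{\alpha_{e,\epsilon}\}$ and $R^+_u\setminus R^+_{u'}=\{-\alpha_{e,\epsilon}\}$. Since both orientations occur, $e$ is compact and the flipped root is invertible. I would also note that $\beta\mapsto-\beta$ is a bijection $R^+_v\setminus R^+_{v'}\to R^+_{v'}\setminus R^+_v$, using the previous lemma together with the disjointness $R^+_v\cap(-R^+_v)=\emptyset$ coming from $R^+_v$ lying in a simplicial cone.

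Given a path $v=v_0,v_1,\ldots,v_n=v'$, for each pair $\{\alpha,-\alpha\}$ of invertible roots let $m_\alpha$ count the indices $i$ at which edge $e_i$ has root $\pm\alpha$. Iterating the one-step formula and taking symmetric differences gives
$$
R^+_v\triangle R^+_{v'}=\bigsqcup_{\{\alpha,-\alpha\}:\ m_\alpha\text{ odd}}\{\alpha,-\alpha\},
$$
so $|R^+_{v'}\setminus R^+_v|$ equals the number of pairs with $m_\alpha$ odd. Therefore
$$
n=\sum_{\{\alpha,-\alpha\}} m_\alpha\ \ge\ \#\{m_\alpha\text{ odd}\}\ =\ |R^+_{v'}\setminus R^+_v|,\qquad n\equiv|R^+_{v'}\setminus R^+_v|\pmod 2.
$$
The parity clause holds for every path, and the inequality applied to a shortest path gives $d(v,v')\ge|R^+_{v'}\setminus R^+_v|$.

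For the matching construction I would induct on $|R^+_{v'}\setminus R^+_v|$. When this is zero the bijection above forces $R^+_v=R^+_{v'}$, so $v=v'$ by Lemma~\ref{lem:positives-differ}. Otherwise I claim that some basis root $\alpha_i$ at $v$ is invertible and lies in $R^+_v\setminus R^+_{v'}$: if every $\alpha_i$ were in $R^+_{v'}$, convexity would give $C(\alpha_1,\ldots,\alpha_d)\subseteq C(v')$ and hence $R^+_v\subseteq R^+_{v'}$, contradicting $v\ne v'$ via Lemma~\ref{lem:positives-differ}; and the failing $\alpha_i$ cannot come from an infinite edge, since the previous lemma puts such roots into every $R^+_u$. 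Then $-\alpha_i\in R^+_{v'}$ by the dichotomy, and traversing the outgoing edge from $v$ with root $-\alpha_i$ reaches a vertex $v''$ for which $R^+_{v'}\setminus R^+_{v''}=(R^+_{v'}\setminus R^+_v)\setminus\{-\alpha_i\}$ is one element smaller. Induction supplies a path from $v$ to $v'$ of length $|R^+_{v'}\setminus R^+_v|$.

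Combining the two inequalities gives $d(v,v')=|R^+_{v'}\setminus R^+_v|$, which together with the parity formula yields the full statement; bipartiteness follows since $d(v,v)=0$ forces every closed path to have even length. The main obstacle is the inductive step: producing a basis root at $v$ that ``points toward $v'$'' is the genuinely geometric input, and hinges on combining convexity of the simplicial cones at $v$ and $v'$ with the uniqueness statement of Lemma~\ref{lem:positives-differ}.
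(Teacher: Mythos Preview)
Your proof is correct and follows the same outline as the paper's: the parity statement and the lower bound $d(v,v')\geq |R^+_{v'}\setminus R^+_v|$ come straight from Axiom~3 (the paper just calls this ``obvious''), and the reverse inequality is obtained by induction, using Lemma~\ref{lem:positives-differ} to locate a basis root at $v$ lying outside $R^+_{v'}$. Your write-up is simply more explicit---you spell out the convexity step behind that use of Lemma~\ref{lem:positives-differ} and check that the edge found is compact (hence traversable), details the paper leaves implicit.
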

\begin{proof}
The first claim follows from Axiom 3.
The inequality  $d(v,v')\geq|R^+_{v'}\setminus R^+_v |$ is obvious. The
converse is deduced by induction, as by~\ref{lem:positives-differ}
for $v'\ne v$ there exists
an incoming edge $(e,\epsilon)$
to $v$  which does not belong to $R^+_{v'}$. 
\end{proof}

\subsubsection{}
Matsumoto graphs are regular of degree $d=\dim\bV$. The lemma below shows that any edge $e$ connecting two vertices $v$ and 
$w$ establishes a one-to-one correspondence between the edges adjacent to
$v$ and to $w$ so that $e$ corresponds to itself. In Proposition~\ref{prp:coloring}
we will show that this correspondence leads to a consistent coloring of the set
of edges $E$ with the elements of a set $X$ of cardinality $d$, so that the correspondence above assigns to an edge adjacent to $v$ the edge
adjacent to $w$ of the same color.

\begin{lem}
\label{lem:color}
Let $(e,\epsilon):v\to v'$, $\alpha=\alpha_{e,\epsilon}$  and $R^+_v=C(\alpha_1,\ldots,\alpha_d)$
with $\alpha_1=-\alpha$. 
Then there is a unique numbering of the basic roots
of $R^+_{v'}=C(\alpha'_1,\ldots,\alpha'_d)$  such that 
$\alpha'_1=-\alpha_1=\alpha$ and for each $i>1$ 
$\alpha'_i$ lies in the cone spanned by $\alpha_i$ and $\alpha_1$.
\end{lem}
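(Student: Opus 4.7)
The plan is to observe that $R^+_v$ and $R^+_{v'}$ differ only by exchanging $\alpha_1$ with $\alpha$, then to analyse the change-of-basis matrix between $(\alpha_1,\ldots,\alpha_d)$ and $(\alpha'_1,\ldots,\alpha'_d)$: its non-trivial $(d-1)\times(d-1)$ block and the inverse of that block will both turn out to be non-negative, which forces the block to be monomial and supplies the desired labeling.

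First I would apply Axiom 3 to both $(e,\epsilon)$ and the reversed edge $(e,-\epsilon):v'\to v$ (legitimate because $v\ne v'$ forces $e\in E_c$) to obtain
\[
R^+_{v'}\setminus R^+_v=\{\alpha\}, \qquad R^+_v\setminus R^+_{v'}=\{-\alpha\}=\{\alpha_1\}.
\]
Linear independence gives $\alpha_i\ne\alpha_1$ for $i>1$, hence $\alpha_i\in R^+_{v'}$; symmetrically $\alpha'_i\in R^+_v$ for $i>1$. Expanding, for $i>1$,
\[
\alpha'_i=a_{i1}\alpha_1+\sum_{j>1}a_{ij}\alpha_j,\qquad a_{ij}\ge 0,
\]
and together with $\alpha'_1=-\alpha_1$ this produces a change-of-basis matrix
$A=\begin{pmatrix} -1 & 0 \\ c & A'' \end{pmatrix}$
whose $(d-1)\times(d-1)$ block $A''$ is non-negative. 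Applying the same argument to the expansions of $\alpha_i$ in the basis $(\alpha'_j)$ shows $A^{-1}$ has the analogous block shape; block-matrix inversion identifies its lower-right block with $(A'')^{-1}$, which is therefore also non-negative.

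The main step, and the only real content, is the elementary linear-algebra fact that a non-negative invertible matrix whose inverse is also non-negative is necessarily monomial. Indeed, $A''(A'')^{-1}=I$ says that for $i\ne j$ the supports of row $i$ of $A''$ and of column $j$ of $(A'')^{-1}$ are disjoint; invertibility then forces exactly one non-zero entry per row and per column of $A''$. So there is a unique permutation $\sigma$ of $\{2,\ldots,d\}$ and positive reals $\lambda_i$ with
\[
\alpha'_i=a_{i1}\alpha_1+\lambda_i\,\alpha_{\sigma(i)}\in C(\alpha_1,\alpha_{\sigma(i)}).
\]
Renumbering $\alpha'_i\mapsto\alpha'_{\sigma^{-1}(i)}$ produces the labeling asserted by the lemma, and its uniqueness is immediate because the expansion coefficients $a_{ij}$ are determined by the basis $(\alpha_1,\ldots,\alpha_d)$.
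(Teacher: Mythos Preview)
Your proof is correct and takes a route genuinely different from the paper's. The paper passes to the quotient $\bV'=\bV/\bR\alpha$: since $\alpha_i\in R^+_{v'}$ and $\alpha'_i\in R^+_v$ for $i>1$, the two simplicial cones $C(\pi(\alpha_2),\ldots,\pi(\alpha_d))$ and $C(\pi(\alpha'_2),\ldots,\pi(\alpha'_d))$ in $\bV'$ each contain the other's generators, hence coincide; uniqueness of the extreme rays of a simplicial cone then matches $\pi(\alpha_i)$ with $\pi(\alpha'_{\sigma(i)})$, which is exactly the statement that $\alpha'_{\sigma(i)}\in C(\alpha_1,\alpha_i)$. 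You instead stay in $\bV$, write down the change-of-basis matrix, and observe that its $(d-1)\times(d-1)$ block $A''$ and the inverse of that block are both entrywise nonnegative, whence $A''$ is monomial. The two arguments are essentially dual: your block $A''$ is precisely the transition matrix between the projected bases in $\bV'$, and ``$A''$ is monomial'' is the matrix translation of ``the two sets of projected rays coincide''. The paper's version is shorter and more geometric; yours is more explicit and self-contained, since you prove the monomial-matrix fact directly rather than citing uniqueness of simplicial generators.

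One cosmetic remark: the parenthetical ``$v\ne v'$ forces $e\in E_c$'' is not quite the right justification. What actually guarantees $e\in E_c$ is that the arrow has a source at all (infinite edges have only a target); the inequality $v\ne v'$ is then automatic from Axiom~3. This does not affect the argument.
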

\begin{proof} Let $\bV'=\bV/\bR\alpha$ and $\pi:\bV\to\bV'$ denote the natural projection. Then
  $C'=\pi(C(\alpha_2,\dots,\alpha_d))=\pi(C(\alpha_2',\dots\alpha_d'))$ coincides with the image under $\pi$ of the convex hull of the set $R^+_v\cap R^+_{v'}$. On the other hand, $C'$ is a simplicial cone in $\bV'$ with generators $\pi(\alpha_2),\dots,\pi(\alpha_d)$ and similarly with generators
 $\pi(\alpha'_2),\dots,\pi(\alpha'_d)$. We choose the enumeration so that $\pi(\alpha_i)=\pi(\alpha_i')$. 
\end{proof}

\begin{lem}
  \label{lem:subset} Let $v$ be a vertex of a Matsumoto graph with basis $\{\alpha_1,\dots,\alpha_d\}$. Let $\{\alpha_1,\dots,\alpha_k\}$ be a subset and $\mathbb W$ be the span of $\{\alpha_1,\dots,\alpha_k\}$. Then $R\cap \mathbb W$ gives a geometric realization of a full subgraph $\Gamma'$ of degree $k$. 
\end{lem}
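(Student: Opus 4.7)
I would define $\Gamma'$ to be the full subgraph of $\Gamma$ on the vertex set
\[
V' := \{\,w \in V \mid R^+_w \triangle R^+_v \subset \mathbb W\,\}.
\]
First, $V'$ coincides with the set of vertices reachable from $v$ by edges whose root lies in $\mathbb W$: moving along such an edge preserves $V'$ by Axiom~3, and conversely, if $w \in V' \setminus \{v\}$, then some incoming basis root $\alpha_j$ at $w$ must lie in $R^+_w \setminus R^+_v$ (else $R^+_w \subset R^+_v$ and Lemma~\ref{lem:positives-differ} forces $w = v$); this $\alpha_j$ lies in $\mathbb W$ by definition of $V'$, and since $-\alpha_j \in R^+_v \subset R$, the edge is compact and crossing it yields a neighbor $w'' \in V'$ with $d(v, w'') = d(v, w) - 1$ by Lemma~\ref{lem:short}. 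In particular $\Gamma'$ is connected. Moreover, for $w, w' \in V'$ connected by an edge, the triangle inequality for symmetric difference gives $R^+_w \triangle R^+_{w'} \subset \mathbb W$, and by Axiom~3 the root of the edge lies in $R^+_{w'} \setminus R^+_w \subset \mathbb W$; so the edges of $\Gamma'$ carry exactly the roots of $R \cap \mathbb W$.

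\textbf{Basis tracking.} I would then prove by induction on $d(v, w)$ that the basis of $\bV$ at each $w \in V'$ contains exactly $k$ elements of $\mathbb W$, and that these span $\mathbb W$. Assume this at some $w \in V'$ with basis $\beta_1 = -\alpha, \beta_2, \ldots, \beta_d$, and cross an edge with root $\alpha \in \mathbb W$ to reach a neighbor $w' \in V'$. Lemma~\ref{lem:color} produces the new basis $\beta'_1 = \alpha$ and, for $i \geq 2$, $\beta'_i = a_i \beta_i + b_i \beta_1$ with $a_i > 0$ (strict positivity being forced by the linear independence of $\beta'_i$ from $\beta'_1 = -\beta_1$) and $b_i \geq 0$. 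Since $\beta_1 \in \mathbb W$, one has $\beta'_i \in \mathbb W \iff \beta_i \in \mathbb W$; so the count of $\mathbb W$-basis elements is preserved, and their span (originally $\mathbb W$) is preserved as well.

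\textbf{Verifying the axioms.} It remains to check the four axioms of a Matsumoto graph for $\Gamma'$ with ambient space $\mathbb W$ and root set $R \cap \mathbb W$. Axiom~1 is inherited from $\Gamma$. Axiom~2 is the content of the basis-tracking step: the $k$ incoming $\mathbb W$-roots at $w \in V'$ are exactly the roots of the edges of $\Gamma'$ at $w$, and they form a basis of $\mathbb W$. For Axiom~3 the key identity is $R^+_w(\Gamma') = R^+_w(\Gamma) \cap \mathbb W$, obtained by writing any $\alpha \in R^+_w(\Gamma) \cap \mathbb W$ as a nonnegative combination of the full basis at $w$ and projecting to $\bV / \mathbb W$ to kill the coefficients of the $d - k$ basis elements outside $\mathbb W$; Axiom~3 then follows by restricting Axiom~3 for $\Gamma$ to $\mathbb W$. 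Axiom~4 holds because on $V'$ the cones $R^+_w(\Gamma)$ all agree outside $\mathbb W$, so agreement inside $\mathbb W$ yields $R^+_w(\Gamma) = R^+_{w'}(\Gamma)$, and Axiom~4 for $\Gamma$ applies. The main obstacle is the strict positivity $a_i > 0$ in Lemma~\ref{lem:color}'s basis change, which ensures that "lies in $\mathbb W$" transforms exactly, not merely generically.
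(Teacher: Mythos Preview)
Your construction of $\Gamma'$ via the vertex set $V'$ and your basis-tracking argument via Lemma~\ref{lem:color} are correct and essentially match the paper's setup. Where your proof falls short is the assertion that ``the edges of $\Gamma'$ carry exactly the roots of $R\cap\bW$'' and, equivalently, the identity $R^+_w(\Gamma')=R^+_w(\Gamma)\cap\bW$. You only establish the easy inclusion: your triangle-inequality step shows that every edge of $\Gamma'$ has its root in $\bW$, and your projection argument shows that $R^+_w(\Gamma)\cap\bW\subset C(\beta_1,\ldots,\beta_k)$. Neither of these shows that an arbitrary $\beta\in R\cap\bW$ is actually the root of some edge whose endpoint lies in $V'$. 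A priori the edge of $\Gamma$ carrying $\beta$ could be attached to a vertex $u\notin V'$, and nothing in your argument rules this out. Thus your $\Gamma'$ is a Matsumoto graph of degree $k$, but with root set $R'$ that you have only shown to satisfy $R'\subset R\cap\bW$; the lemma asserts equality.

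This missing inclusion is precisely the substance of the paper's proof. Given $\beta\in R\cap\bW$, the paper picks $u\in\Gamma$ with $\beta$ in the basis at $u$, then produces (by induction on $|(R^+_{v'}\cap\bW)\setminus(R^+_u\cap\bW)|$) a vertex $v'\in\Gamma'$ with $R^+_{v'}\cap\bW\subset R^+_u\cap\bW$. Since $\pm\beta$ then lies in $C(\alpha'_1,\ldots,\alpha'_k)$, a separating linear functional $\psi$ (vanishing on $\beta$ and strictly positive on the rest of $R^+_u$) forces $\beta$ to equal one of the $\alpha'_i$, hence $\beta\in R'$. Your projection-to-$\bV/\bW$ idea recovers the first observation $R^+_{v'}\cap\bW\subset C(\alpha'_1,\ldots,\alpha'_k)$, but you are missing both the inductive walk toward $u$ and the linear-functional extremality argument that pins $\beta$ down as a generator of the cone rather than merely an interior element.
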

\begin{proof}
We define $\Gamma'$ as the connected component of $v$ of the subgraph
$\Gamma''$ of $\Gamma$ having the same vertices as $\Gamma$ and the edges
$e\in E$ such that $\alpha_{e,1}\in\bW$.

The vertex $v\in\Gamma'$ has $k$ incoming edges marked by
$\{\alpha_1,\dots,\alpha_k\}$.
Lemma~\ref{lem:color} implies that $\Gamma'$ is the Matsumoto graph of degree $k$ with the set of roots
  $R'\subset R\cap\mathbb W$. We will show that $R'=R\cap\mathbb W$.

  For any vertex $v'\in\Gamma'$ we denote by $\alpha_1',\dots,\alpha'_k$ the basis at $v'$.
Note first that for any $v'\in\Gamma'$ we have $R^+_{v'}\cap \mathbb W\subset C(\alpha_1',\dots,\alpha_k')$ again by Lemma \ref{lem:color}.
  Next by induction on the cardinality of $(R^+_{v'}\cap\mathbb W)\setminus (R^+_{u}\cap\mathbb W)$ we can prove that for any $u\in \Gamma$ there exists
  $v'\in \Gamma'$ such that $(R^+_{v'}\cap\mathbb W)\subset (R^+_{u}\cap\mathbb W)$.

  Now let $\beta\in R\cap \mathbb W$. There exists $u\in\Gamma$ such that $\beta$
  is an element of the basis at $u$. Then either $\beta$ or $-\beta$ lies in $R^+_{v'}\cap\mathbb W\subset C(\alpha'_1,\dots,\alpha_k')$ where $v'\in\Gamma'$ is as above. Suppose that $\beta\in R^+_{v'}\cap\mathbb W$.
  We claim that $\beta=\alpha'_i$ for some $i\leq k$. Indeed, consider a linear function $\psi$ such that $\psi(\beta)=0$ and $\psi(\gamma)>0$ for
  all other $\gamma\in R^+_u$. If $\beta\neq\alpha'_i$ for all $i\leq k$ then $\psi(\beta)>0$ and we get a contradiction.
  Similarly, if $-\beta\in R^+_{v'}\cap\mathbb W$ we can prove that $-\beta=\alpha'_i$ by the same method. The proof is complete.
  \end{proof}

\begin{notation}
\label{not:subgraph}
The graph $\Gamma'$ in the above construction will be denoted
$\Gamma(v,\alpha_1,\ldots,\alpha_k)$.
\end{notation}

\subsection{The case $d=2$}
\subsubsection{}
Here are all connected graphs of degree two.
\begin{itemize}
\item[1.] A polygon. Here all edges are compact. If an $n$-gon
admits a geometric realization,  $n$ has to be even by Lemma~\ref{lem:short}. A polygon with an even number of edges
is the Cayley graph of a dihedral group, so it admits a realization,
see~\ref{ss:coxetergroups}.
\item[2.] A graph with an infinite number of compact edges. This is 
the Cayley graph of the infinite dihedral group, so it admits a realization, see~\ref{ss:coxetergroups}.
\item[3.] A graph with one infinite edge and an infinite number of compact edges.
\item[4.] A graph with two infinite edges and a finite number of compact edges.
\end{itemize}
A geometric realization for the cases 3,4 can be easily found.
Note that in all Matsumoto cases there is a canonical coloring of the
edges so that any two adjacent edges have different colors.

\subsection{Braid relation and Matsumoto theorem}
\subsubsection{Braid relation}

Fix a vertex  $v\in\Gamma$. Choose two edges adjacent to $v$,
one incoming and one outcoming. Let $\alpha$ and $\beta$ be the corresponding roots.  There is a unique continuation of the path $(\alpha,\beta)$, potentially in both directions, so that
the roots assigned to the new edges belong to $\Span_\R(\alpha,\beta)$.
This yields a rank 2 Matsumoto subgraph $\Gamma(v,\alpha,\beta)$ of 
$\Gamma$ spanned by $\alpha$ and $\beta$. 

We denote by $\cP=\cP(\Gamma)$ the set of paths in $\Gamma$. We define
the braid relation on $\cP$ as the minimal equivalence relation 
satisfying the following two properties. 

\begin{itemize}

\item[1.] For any rank two Matsumoto  subgraph 
$\Gamma(v,\alpha,\beta)$ presented by a $2m$-gon and colored by $X=\{x,y\}$, the length $m$
paths 
$$
xyx\ldots\mathrm{ and }\ yxy\ldots
$$
starting at $v$ (and having the same end) are equivalent.
\item[2.] The equivalence relation is closed under the concatenation of paths.
\end{itemize}
Note that equivalent paths have the same ends and the same length.

The main result of this note is the following generalization of
Matsumoto theorem~\cite{M}.

\begin{thm}
\label{thm:matsumoto}
Let $\Gamma$ be a Matsumoto graph. Then any two shortest
paths in $\Gamma$ from $v$ to $v'$ are braid equivalent.
\end{thm}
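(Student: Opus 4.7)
The plan is to induct on $n=d(v,v')$, the base cases $n\le 1$ being trivial (there is at most one path of length $\le 1$ between two vertices).

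For the inductive step, let $p_1,p_2$ be two shortest paths from $v$ to $v'$, and let $\alpha$ and $\beta$ be the roots of their first edges. A direct consequence of the axioms (together with Lemma~\ref{lem:short}) is that a root $\gamma$ arises as the initial root of some shortest path from $v$ to $v'$ precisely when $-\gamma$ lies in the basis at $v$ and $\gamma\in R^+_{v'}$; in particular $\alpha,\beta$ are invertible, $-\alpha,-\beta$ lie in the basis at $v$, and $\alpha,\beta\in R^+_{v'}$. If $\alpha=\beta$, the two paths share their first edge, and the inductive hypothesis applied to their length-$(n-1)$ tails gives $p_1\sim p_2$.

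The substantive case is $\alpha\ne\beta$. Setting $\bW=\Span(\alpha,\beta)$, I would pass to the rank-two Matsumoto subgraph $\Gamma'=\Gamma(v,-\alpha,-\beta)$ provided by Lemma~\ref{lem:subset}, and establish the following sub-claim: $\Gamma'$ is a finite $2m$-gon, and the antipode $u$ of $v$ in $\Gamma'$ lies on some shortest path from $v$ to $v'$ in $\Gamma$. Granted this, the two shortest paths $q_\alpha,q_\beta$ in $\Gamma'$ from $v$ to $u$ (starting with $\alpha$ and $\beta$, respectively) are braid-equivalent by the very definition of the braid relation; picking any shortest path $r$ from $u$ to $v'$ in $\Gamma$, the concatenations $q_\alpha\cdot r$ and $q_\beta\cdot r$ are length-$n$ shortest paths from $v$ to $v'$ that are mutually braid-equivalent. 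Since $p_1$ and $q_\alpha\cdot r$ share the first edge $\alpha$, the inductive hypothesis applied to their length-$(n-1)$ tails yields $p_1\sim q_\alpha\cdot r$; analogously $p_2\sim q_\beta\cdot r$. Combining these equivalences produces $p_1\sim p_2$.

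The main obstacle is the sub-claim itself, and the key idea is a planar convexity argument. Since $R^+_{v'}$ equals $R$ intersected with the simplicial cone $C^{v'}$ spanned by the basis at $v'$, its restriction to $\bW$ is $R^+_{v'}\cap\bW=R\cap W$, where $W=C^{v'}\cap\bW$ is a (planar) convex cone. Both $\alpha$ and $\beta$ lie in $W$, so by convexity $C(\alpha,\beta)\subseteq W$; intersecting with $R$ yields the key inclusion
\[
R^-_v\cap\bW \;=\; R\cap C(\alpha,\beta) \;\subseteq\; R^+_{v'}.
\]
This already bounds $|R^-_v\cap\bW|\le n$, ruling out the infinite all-compact polygon in the rank-$2$ classification. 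The remaining infinite configurations carry a non-invertible root $\mu\in\bW$; such $\mu$ would lie in $R^+_v\cap\bW\subseteq C(-\alpha,-\beta)$ and also in $R^+_{v'}$, hence in $W$, but an elementary planar inspection shows that any convex cone in $\bW$ containing $\alpha,\beta$ and a ray strictly inside $C(-\alpha,-\beta)$ must also contain $-\alpha$ or $-\beta$, contradicting $\alpha,\beta\in R^+_{v'}$. Thus $\Gamma'$ is a finite $2m$-gon. At the antipode $u$ one has $R^+_u\cap\bW=-R^+_v\cap\bW=R^-_v\cap\bW$ while $R^+_u$ agrees with $R^+_v$ outside $\bW$; a short count of $R^+_{v'}\setminus R^+_u$ using the key inclusion together with $|R^+_{v'}\cap\bW|=m$ then gives $d(u,v')=n-m$, placing $u$ on a geodesic from $v$ to $v'$.
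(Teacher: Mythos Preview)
Your argument is correct and follows the same core strategy as the paper: induct on $n$, pass to the rank-two subgraph $\Gamma(v,-\alpha,-\beta)$, show it is a finite polygon whose antipode lies on a geodesic from $v$ to $v'$, and then compare paths sharing their first edge via the inductive hypothesis on the length-$(n-1)$ tails.

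The only organizational difference is this: the paper first invokes the equality of root sets $\{\alpha_i\}=\{\beta_j\}$ (from Lemma~\ref{lem:short}) to locate $\beta_1$ among the $\alpha_i$, and when $\beta_1=\alpha_i$ with $i<n$ it inserts a bridging path $\gamma$ of length $i-1$ from $v'_1$ to $v_i$; this reduces to the special case $\beta_1=\alpha_n$, $\alpha_1=\beta_n$ before the polygon argument is used. You bypass this reduction entirely and go straight to the rank-two polygon whenever the first edges differ, which is more direct and does not need the set equality $\{\alpha_i\}=\{\beta_j\}$. Conversely, the paper is quite terse about why the rank-two subgraph is a polygon (it just cites finiteness of $R^+_{v'}\cap R^-_v$ and positivity of non-invertible roots), whereas your planar convexity argument---that a convex cone in $\bW$ containing $\alpha,\beta$ and a ray strictly inside $C(-\alpha,-\beta)$ must contain $-\alpha$ or $-\beta$---makes the exclusion of the segment-type graphs (case~4) explicit. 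Both routes arrive at the same endgame: $d(u,v')=n-m$ and the braid relation on the two half-polygons.
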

The proof is given in Section~\ref{sec:proof}.

\section{Examples}
\label{sec:exm}

In this section we present two examples of Matsumoto graphs: the
Cayley graphs of Coxeter groups and the skeleta of admissible components 
of the root groupoid, see~\cite{GHS}, Section 5.

\subsection{Coxeter groups}
\label{ss:coxetergroups}

The Matsumoto structure on the Cayley graph of a general Coxeter group 
$(G,X)$, based on its geometric representation~\cite{H}, 5.3, is described below in \ref{sss:coxeter}. We present here an even more classical construction that makes sense for Weyl groups.

\subsubsection{Weyl group} Let $\fg$ be a semisimple Lie algebra
with a Cartan subalgebra $\fh$, the root system $\Delta$ and 
the set of simple roots $\alpha_1,\ldots,\alpha_n$. The Weyl group
$W$ is generated by the simple reflections $s_i=s_{\alpha_i}$.

We define $\Gamma$ as the Cayley graph of $W$ with respect to the set
$S$ of simple reflections, so that the edges (they are all compact) are
of the form $w\to ws$, $s\in S$. The arrow $w\to ws_i$ is marked with the root $\bR_{\geq 0}\cdot w(\alpha_i)$.

The same construction of Matsumoto structure generalizes to Weyl groups
of Kac-Moody Lie algebras, as well as for root Lie 
superalgebras~\cite{GHS}.

Note that the Matsumoto structures corresponding to the classical root
systems $B_n$ and $C_n$ are isomorphic, as in our formalism roots are
rays and not vectors.

\subsubsection{General Coxeter group}
\label{sss:coxeter}
Let $(G,X)$ be a Coxeter group with the generating set $X$. We define
$\Gamma$ as the Cayley graph of $G$ with respect to the set $X$ of 
generators. All edges in $\Gamma$ are compact and are colored by the
set of generators $X$. The geometric representation 
$\sigma:G\to GL(\bV)$, \cite{H}, 5.3, where
$\bV=\Span_\bR\{\alpha_x, x\in X\}$, yields a geometric realization of 
$\Gamma$ as follows. It assigns to an arrow $x:g\to gx$ the root
$\R_{\geq 0}\cdot\sigma_g(\alpha_x)\in \bV$. The set $R^+_g$ is spanned by 
$-\sigma_g(\alpha_x)$, $x\in X$. The axioms 1--3 of the geometric realization are verified immediately. The Axiom 4 follows from
\cite{H}, 5.6.

\subsection{Skeleta}
The application to skeleta is the {\sl raison-d'\^etre} of the present note.
Let $X$ be a finite set and let $v=(\fh,a:X\to\fh,b:X\to\fh^*,p:X\to\Z_2)$
be an admissible root datum, see~\cite{GHS}, Section 2.
Let $\Sk(v)$ be the connected component of $v$ in the skeleton 
of the root groupoid $\cR$, see~\cite{GHS}, Sect.~3,~4.
The graph $\Gamma$ assigned 
to it is a regular graph with the edges colored by $X$ having the
same vertices as $\Sk(v)$. The compact edges of $\Gamma$ 
are the edges of $\Sk(v)$. The 
infinite edges of $\Gamma$ are described by the nonreflectable pairs 
$(v',x)$, with $v'\in\Sk(v)$ and $x\in X$. The geometric realization of
$\Gamma$ assigns to each arrow $r_x:v\to v'$ the real root $b_{v'}(x)$
and to an infinite edge defined by the nonreflectable pair $(v',x)$
the real nonreflectable root $b_{v'}(x)$. In this realization the roots
of $\Gamma$ are precisely the real roots of the component of $v\in\cR$.

It is proven in \cite{GHS} that $\Sk(v)$ is a Coxeter graph. Now 
Theorem~\ref{thm:matsumoto} implies that the analog of Matsumoto theorem holds of $\Sk(v)$.
Note that Theorem~\ref{thm:matsumoto}  gives another proof of coxeterity of $\Sk(v)$.

\section{Proof of Theorem~\ref{thm:matsumoto}}
\label{sec:proof}

We will prove the theorem by induction in the length $n$ of the shortest paths. 

\subsection{}
The cases $n=0$ are obvious as a Matsumoto graph does not have 
loops and multiple edges. 

\subsection{}
Assume the result is proven for shortest paths of length $\leq n-1$.
Let
$$
\underline\alpha:v=v_0\xrightarrow{\alpha_1}\ldots\xrightarrow{\alpha_n}v_n=v'
$$
and
$$
\underline\beta: v=v'_0\xrightarrow{\beta_1}\ldots\xrightarrow{\beta_n}v'_n=v'
$$
be a pair of length $n$ shortest paths leading from $v$ to $v'$.
By Lemma~\ref{lem:short} the sets of $\alpha_i$ and of $\beta_j$ coincide.
Thus, $\beta_1=\alpha_i$ for some $i$. 

\subsection{}Assume that $i<n$. Consider the
path 
$$v'_1\xrightarrow{-\beta_1}v'_0=v\xrightarrow{\alpha_1}\ldots
\xrightarrow{\alpha_i}v_i.
$$
 Obviously $d(v'_1,v_i)=i-1$. Choose
a path $\gamma$ of length $i-1$ connecting $v'_1$ with $v_i$. Then, by the 
inductive hypothesis, the path $\alpha$ is braid equivalent to the 
concatenation of $\beta_1$, $\gamma$, and the segment of $\alpha$
connecting $v_i$ with $v_n=v'$. In the same manner the above concatenation
is equivalent to the path $\beta$.  This proves that in this case
the paths $\alpha$ and $\beta$ are braid equivalent. Therefore, we may assume that $i=n$, that is $\beta_1=\alpha_n$.
\subsection{}
In the same way, we can assume that $\alpha_1=\beta_n$.
\subsection{}
It remains to deal with the case when $\alpha_1=\beta_n$ and $\alpha_n=
\beta_1$. One has $\alpha_1,\beta_1\in R^+_{v'}\cap R^-_v$. This means that 
$$\Gamma(v,-\beta_1,\alpha_1)=C(\alpha_1,\beta_1)\cap R\subset R^+_{v'}\cap R^-_v.$$
The roots assigned to infinite edges are everywhere positive, 
the intersection $R^+_{v'}\cap R^-_v$ is finite, so
$\Gamma(v,-\beta_1,\alpha_1)$ is a polygon with an even number of edges.
Denote by $v''$ the vertex opposite to $v$. One has 
$$
d(v,v')=d(v,v'')+d(v'',v').
$$
Denote by $\gamma$  a shortest path connecting $v''$ with $v'$.
Denote by $\gamma'$, $\gamma''$ two equivalent halves of
$\Gamma(v,-\beta_1,\alpha_1)$
 connecting
$v$ to $v''$. Then the concatenation of $\gamma'$ with $\gamma$ is equivalent to $\alpha$ by the inductive hypothesis, as they start with the same root $\alpha_1$, and similarly the concatenation of $\gamma''$ with 
$\gamma$ is equivalent to $\beta$.

This proves the theorem.
\qed

\section{Complements}

\subsection{Edge coloring}

We will now prove the existence of a consistent coloring of the set of edges $E$ of a Matsumoto graph $\Gamma$. 

\begin{prp}
\label{prp:coloring}
Let $\Gamma$ be a graph of degree $d$ endowed with a geometric realization. Let $|X|=d$. There exists a coloring $E\to X$ of the edges of $\Gamma$ consistent with the isomorphisms defined by Lemma~\ref{lem:color}
for each arrow $v\to v'$. The coloring is unique up to automorphism of $X$.
\end{prp}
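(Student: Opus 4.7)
The plan is to fix a basepoint $v_0 \in V$, pick any bijection $c_0$ from $X$ to the set of edges incident to $v_0$, and propagate this data to every vertex. For each oriented edge $e\colon v \to v'$, Lemma~\ref{lem:color} yields a bijection $\phi_e$ from edges at $v$ to edges at $v'$ sending $e$ to itself. For a path $p = e_1 \cdots e_k$ from $v_0$ to $v$, set $\phi_p := \phi_{e_k} \circ \cdots \circ \phi_{e_1}$ and define $c_v := \phi_p \circ c_0$. The entire task is to show that $c_v$ depends only on $v$.

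Before addressing path independence, I would record two preliminary facts. First, $\phi_{-e} = \phi_e^{-1}$: writing $\alpha_1 = -\alpha$ for the incoming root of $e$ at $v$, the cone condition $\alpha_i' \in C(\alpha_i,-\alpha)$ of Lemma~\ref{lem:color} is equivalent (by projecting to $\bV/\bR\alpha$, using that $\alpha_i'$ is not parallel to $\alpha$) to $\alpha_i \in C(\alpha_i',\alpha)$, which is exactly the cone condition characterising $\phi_{-e}(\alpha_i')=\alpha_i$. Second, in any rank $2$ Matsumoto subgraph (a $2m$-gon) the two-dimensionality of the basis forces the pair of colors assigned to the two polygon edges at one vertex to be carried to the pair of polygon edges at the next vertex; a direct induction along the polygon then shows that both halves carry the coloring of the polygon edges at $v$ to the same coloring of the polygon edges at the opposite vertex $v''$.

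With these in hand, I would define $c_v$ via any shortest path from $v_0$ to $v$. By Theorem~\ref{thm:matsumoto} any two shortest paths are braid equivalent, so well-definedness reduces to \emph{braid invariance of $\phi_p$}: if $p, p'$ are the two length-$m$ halves of a rank $2$ polygon $\Gamma(v,\alpha,\beta)$ running from $v$ to $v''$, then $\phi_p = \phi_{p'}$. For an edge $\xi$ at $v$ whose root lies in $\Span(\alpha,\beta)$, this is the preliminary fact about rank $2$ subgraphs. For $\xi$ with root $\gamma \notin \Span(\alpha,\beta)$, I would use Lemma~\ref{lem:subset} to form the rank $3$ subgraph $\Gamma' = \Gamma(v,\gamma,\alpha,\beta)$, which contains the entire polygon. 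The key observation is that the projection $\pi \colon \bV \to \bV/\Span(\alpha,\beta)$ is preserved along the polygon: at each step the new propagated root lies in the cone spanned by the previous one and a polygon root, which projects to $0$; thus $\pi$ of the propagated root stays a positive multiple of $\pi(\gamma)\neq 0$. Hence at $v''$ the propagated root is a basis root of $\Gamma'$ lying outside $\Span(\alpha,\beta)$. Since the basis of $\Gamma'$ at $v''$ already contains the two polygon roots (both in $\Span(\alpha,\beta)$) among its three elements, only one basis root lies outside that span, and it must equal the propagated root for either half.

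Once $c_v$ is well-defined, consistency across adjacent vertices $v, v'$ is immediate from the bipartite property (Lemma~\ref{lem:short}): $d(v_0,v') = d(v_0,v) \pm 1$, and in either sign we extend the shorter shortest path by $\pm e$, with involutivity $\phi_{-e}=\phi_e^{-1}$ handling the backward case. Uniqueness is automatic: any two consistent colorings differ at $v_0$ by a permutation $\sigma \in \Sym(X)$, and the propagation rule transports this difference unchanged to every vertex, yielding $c' = c \circ \sigma$ globally. The main obstacle is the braid-invariance step, where the projection trick combined with Lemma~\ref{lem:subset}, used to promote the rank $2$ polygon to a rank $3$ Matsumoto subgraph, is what makes the argument go through cleanly.
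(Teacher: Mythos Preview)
Your argument is correct. The route differs from the paper's in an interesting way. The paper identifies $X$ with the generators of $R^+_v$, defines the monodromy $\theta_\gamma$ of an arbitrary closed loop $\gamma$, and then shows $\theta_\gamma=\id_X$ for \emph{every} loop by a minimal-counterexample argument: in a shortest offending loop one locates a closest pair of edges carrying opposite roots $\alpha,-\alpha$ and, if they are not diametrically opposite, shortens the loop; when all such pairs are opposite the loop factors as $\gamma'\circ(\gamma'')^{-1}$ with $\gamma',\gamma''$ shortest, and Theorem~\ref{thm:matsumoto} finishes. You instead build the coloring only along shortest paths, invoke Theorem~\ref{thm:matsumoto} for well-definedness, and then use bipartiteness to obtain edge-by-edge consistency---which \emph{a posteriori} forces every loop to have trivial monodromy without ever running the shortening argument. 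A second difference is that the paper asserts in one line that Theorem~\ref{thm:matsumoto} yields $\theta_{\gamma'\circ(\gamma'')^{-1}}=\id_X$, leaving the braid-invariance of the local bijections $\phi_e$ implicit; you make this step explicit via the projection $\bV\to\bV/\Span(\alpha,\beta)$ and the rank-$3$ subgraph of Lemma~\ref{lem:subset}. (In fact the appeal to Lemma~\ref{lem:subset} is not strictly necessary: the same projection already separates the $d-2$ non-polygon basis roots at $v''$ in the full graph, since they form part of a basis of $\bV$.) What the paper's approach buys is a direct proof of the stronger statement that all loops act trivially; what yours buys is a cleaner path to the coloring itself and an explicit justification of the step the paper leaves to the reader.
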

\begin{proof}
Let us identify $X$ with the set of generators of $R^+_v$.
Any closed path $\gamma:v\to v$ in $\Gamma$ defines an automorphism
$\theta_\gamma:X\to X$. It is enough to prove that $\theta_\gamma=\id_X$ for any $\gamma$.
Theorem ~\ref{thm:matsumoto} implies that if $\gamma',\gamma'':v\to v'$
are two shortest paths, $\theta_{\gamma'\circ\gamma^{\prime\prime-1}}=\id_X$.

Let us assume, to the contrary, that there exists a closed path 
$\gamma:v\to v$ for which $\theta_\gamma\ne\id_X$. Choose a shortest 
such $\gamma$, of length $2m$. For any root $\alpha$ in $\gamma$ the root
$-\alpha$ should also appear. Let us choose a closest pair of edges
is $\gamma$ with assigned roots $\alpha$ and $-\alpha$.

It is easy to see that if $\alpha$ and $-\alpha$ are not precisely opposite to each other, one can produce a shorter counterexample.

In the remaining case when all pairs $\alpha$ and $-\alpha$ are opposite
to each other, the loop $\gamma$ can necessarily be presented
as  $\gamma'\circ\gamma^{\prime\prime-1}$ where $\gamma'$ and $\gamma''$ are two shortest paths with the same ends. This proves the claim.
\end{proof}

\begin{rem}
Note that in the examples of graphs with a geometric realization presented
in Section~\ref{sec:exm}, the coloring provided by Proposition~\ref{prp:coloring} coincides with the apriori coloring
given, in the first case by the set $S$ of generator, and in the case of skeleta --- by the set $X$.
\end{rem}

\subsection{Exchange condition}

In this subsection we discuss a partial exchange condition that holds
in the context of skeleta \cite{GHS}, 4.2.5. The edges of a
skeleton $\Sk(v)$ are {\sl reflexions} that can be {\sl anisotropic}
or {\sl isotropic}, see definitions in \cite{GHS}, 4.1.1.
The following result shows that, in the context of skeleta, the exchange condition holds for anisotropic reflexions.
\begin{prp}
Let 
$$
v_0\stackrel{r_{x_1}}{\to}\ldots\stackrel{r_{x_n}}{\to}v_n
$$
be a sequence of reflexions that is a shortest path from $v_0$ to $v_n$ and let $r_y:v'_0\to v_0$ be an anisotropic reflexion such that the
path $v_0'\to v_0\to\ldots\to v_n$ is not shortest. Then there exists 
$j$ so that the composition
$$ v'_0\stackrel{r_y}{\to} v_0\to\ldots\stackrel{r_{x_i}}{\to}v_j$$ 
coincides with the composition
$$
v'_0\stackrel{r_{x_1}}{\to}v'_1\to\ldots
\stackrel{r_{x_{j-1}}}{\to}v'_{j-1}=v_j.
$$
\end{prp}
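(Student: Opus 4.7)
The plan is to identify the exchange index $j$ via a root-theoretic analysis, and then to verify the claimed identity of morphisms by exploiting the uniqueness of anisotropic reflexions in the root groupoid.

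First, the prepended path $v'_0\to v_0\to\dots\to v_n$ has length $n+1$ and is not shortest, so Lemma \ref{lem:short} (parity of path lengths) combined with the triangle inequality forces $d(v'_0,v_n)=n-1$. Consequently $|R^+_{v_n}\setminus R^+_{v_0}|=n$ while $|R^+_{v_n}\setminus R^+_{v'_0}|=n-1$.

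Next, let $\beta\in R^+_{v'_0}$ denote the root of the oriented edge $v_0\to v'_0$, so $-\beta\in R^+_{v_0}$; the sets $R^+_{v_0}$ and $R^+_{v'_0}$ coincide except for the interchange of $\beta$ and $-\beta$. Comparing the cardinalities above forces $\beta\in R^+_{v_n}$, hence $\beta\in R^+_{v_n}\setminus R^+_{v_0}$. The latter set is the disjoint union of the edge roots $\gamma_i\in R^+_{v_i}\setminus R^+_{v_{i-1}}$ along the shortest path (by Axiom 3 applied iteratively), so there is a unique $j\in\{1,\dots,n\}$ with $\beta=\gamma_j$. This is the candidate index.

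To verify the exchange, I would inductively define $v'_i:=r_{x_i}(v'_{i-1})$, which requires reflectability of $x_i$ at $v'_{i-1}$. The composition $W:=r_{x_{j-1}}\circ\dots\circ r_{x_1}:v_0\to v_{j-1}$, acting linearly on $\fh$, conjugates the anisotropic reflexion $r_y$ (reflection along $\beta$) to the reflection along $W(-\beta)=-\gamma_j$. Since $\beta=\gamma_j$ and anisotropic reflexions are uniquely determined by their root, this conjugate coincides with $r_{x_j}$ at $v_{j-1}$, forcing $v'_{j-1}=v_j$ and the equality of the two compositions as morphisms $v'_0\to v_j$.

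The main obstacle is the final step: both the reflectability of the $x_i$ at the shifted vertices $v'_{i-1}$ and the identification of the conjugate reflexion rely essentially on the anisotropy of $r_y$. Anisotropic reflexions are involutions uniquely determined by their root and act as genuine linear reflections preserving the root structure; these properties fail in general for isotropic reflexions, accounting for the restriction in the statement. Carrying this out rigorously will likely use the skeletal framework of \cite{GHS}, Section 4, where the interaction between reflectability and the linear action is made precise.
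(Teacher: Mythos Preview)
Your proposal is correct and follows essentially the same route as the paper: locate the index $j$ by recognizing the root of $r_y$ among the roots crossed along the shortest path $v_0\to v_n$, and then build the parallel path $v'_0\to v'_1\to\ldots$ using the anisotropy of $r_y$. The paper executes the second step by invoking the ``namesake'' construction of \cite{GHS}, 4.3.6--4.3.7, which is exactly the precise form of the conjugation/reflectability argument you outline in your last two paragraphs.
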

\begin{proof}
By the assumptions, $\alpha=b_{v_0}(y)=b_{v_j}(x_j)$ for some  $j$.
The path $v'_0\to\ldots\to v'_{j-1}$ exists as the namesake of the path
$v_0\to\ldots\to v_{j-1}$, see~\cite{GHS}, 4.3.6, 4.3.7. The equality
$v'_{j-1}=v_j$ then follows from 4.3.7.
\end{proof}

\subsection{Geometric properties of roots}

\begin{lem}
    \label{lem:limit_points} Let $\Gamma$ be a Matsumoto graph with set of roots $R$. Consider $R$ as a set of points on the unit sphere in $\bV$.
    If $\alpha,-\alpha\in R$  (that is, if $\alpha$ is 
    the root of a compact edge) then $\alpha$ is an isolated point of $R$ in the usual topology on the unit sphere.
  \end{lem}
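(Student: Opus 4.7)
The plan is to exploit the cone structure at the two endpoints of the compact edge carrying the root $\alpha$. Pick adjacent vertices $v,v'$ with the oriented arrows $v\to v'$ and $v'\to v$ carrying roots $\alpha$ and $-\alpha$ respectively. By Axiom~2, $\alpha$ is one of the $d$ basic roots of the simplicial cone $R^+_{v'}$, and $-\alpha$ is one of the basic roots of $R^+_v$.

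Next I would introduce two linear functionals $\psi_v,\psi_{v'}$ on $\bV$, dual to these two bases: $\psi_v$ takes value $1$ on $-\alpha$ and vanishes on the remaining basic roots at $v$, while $\psi_{v'}$ takes value $1$ on $\alpha$ and vanishes on the remaining basic roots at $v'$. By construction $\psi_v$ is non-negative on the cone $R^+_v$ and $\psi_{v'}$ is non-negative on $R^+_{v'}$; the critical asymmetry is that $\psi_{v'}(\alpha)=+1$ whereas $\psi_v(\alpha)=-1$, so the unit vector $\alpha$ takes opposite signs under the two functionals.

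The core of the argument is then a brief pinching. Let $\beta\in R$ be a unit vector close to $\alpha$ on the sphere. Since every root lies in $R^+_{v'}\sqcup(-R^+_{v'})$, and continuity gives $\psi_{v'}(\beta)>0$ for $\beta$ near $\alpha$, we conclude $\beta\in R^+_{v'}$. If furthermore $\beta\neq\alpha$ as a ray, Axiom~3 for the edge $v\to v'$ yields $R^+_{v'}\setminus R^+_v=\{\alpha\}$, so $\beta\in R^+_v$ as well, forcing $\psi_v(\beta)\geq 0$. Then $|\psi_v(\beta)-\psi_v(\alpha)|\geq 1$ and hence $\|\beta-\alpha\|\geq 1/\|\psi_v\|$, producing a fixed neighborhood of $\alpha$ on the unit sphere disjoint from $R\setminus\{\alpha\}$.

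I expect no real obstacle in carrying this out: the key observation, and the reason the statement should fail in general for non-invertible roots, is that compactness of the edge provides us with \emph{both} vertices $v$ and $v'$ simultaneously, so that $\alpha$ sits on the boundary of $R^+_{v'}$ while $-\alpha$ sits on the boundary of $R^+_v$; for a non-invertible root only one of these two constraints is available, which is consistent with possible accumulation at such roots.
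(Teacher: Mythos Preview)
Your proposal is correct and follows essentially the same route as the paper: both arguments pick the two endpoints $v,v'$ of the compact edge, use Axiom~3 to force any nearby root $\beta\neq\alpha$ into $R^+_v\cap R^+_{v'}$, and then observe that this set is bounded away from $\alpha$ because it lies inside a closed simplicial cone that does not contain $\alpha$. Your dual functionals $\psi_v,\psi_{v'}$ are simply the facet inequalities of those cones made explicit, yielding a quantitative separation where the paper argues qualitatively via closedness.
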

\begin{proof} 
Let $\alpha$ mark an arrow $v'\to v$, so that $\alpha=\alpha_1,\ldots,\alpha_d$ form the basis of roots at $v$. If there is a sequence of roots
$\beta_k\in R^+_v$ converging to $\alpha$, then $\alpha$ belongs to the closure of $R^+_v\setminus\alpha=R^+_v\cap R^+_{v'}\subset C(\alpha_2,\ldots,\alpha_d)$ which is impossible. For the same reason a sequence 
of $\beta_k\in R^-_v$ cannot possibly converge to $\alpha$:
then $-\beta_k\in C(\alpha_2,\ldots,\alpha_d)$ would converge to $-\alpha$. This proves the result.
\end{proof}

  Note that a noninvertible root $\alpha$ may be a limit point in $R$ as we will see in~\ref{exa:nonisolated} below. 

\subsection{Dual picture}
\label{ss:dual}
The notion of Matsumoto graph can be equivalently expressed in
the dual picture, in terms of convex geometry. We present below
this equivalent language.

\hbox{}

\subsubsection{}

Let $\Gamma$ be a Matsumoto graph with the set of roots $R$ in a vector space $\bV$. For every $\alpha\in R$ define a hyperplane $H_{\alpha}$ and a half-space $U_{\alpha}$
  in the dual space $\bV^*$ by
  $$H_{\alpha}=\{\xi\in\bV^*\mid \langle \xi,\alpha \rangle=0\},\quad U_{\alpha}=\{\xi\in\bV^*\mid \langle \xi,\alpha \rangle\geq 0\}.$$
  For a vertex $v\in\Gamma$ with assigned basis $\alpha_1,\dots,\alpha_d$ we define a chamber $C^\vee_v$ to be the intersection $\cap U_{\alpha_i}$.
Then $C^\vee_v$ is a simplicial cone in $\bV^*$.
\begin{lem}
\label{lem:convexity} Let
$D=\cup_{v\in\Gamma} C^\vee_v$ and 
$D'$ be the set of all $\xi\in\bV^*$ such that $\langle\xi,\alpha \rangle\geq 0$ for all but finitely many invertible $\alpha\in R_v^+$. Then $D=D'$. In particular, $D$ is conical convex set. 
\end{lem}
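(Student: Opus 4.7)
My plan is to prove the two inclusions $D \subseteq D'$ and $D' \subseteq D$ separately; the reverse inclusion carries the weight of the argument, while conical convexity falls out once the characterization $D = D'$ is in place.

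For $D \subseteq D'$ I would fix the reference vertex $v$ used in the definition of $D'$, and take an arbitrary $\xi \in C_w^\vee \subset D$, so $\langle \xi, \alpha \rangle \geq 0$ for every $\alpha \in R_w^+$. By Lemma~\ref{lem:short} the symmetric difference $R_v^+ \bigtriangleup R_w^+$ has cardinality $2\,d(v,w)$ and is therefore finite; since non-invertible roots lie in $R_u^+$ for every $u$ (Lemma~2.4.1), the roots of $R_v^+ \setminus R_w^+$ are invertible. Consequently the only $\alpha \in R_v^+$ that can violate $\langle \xi, \alpha \rangle \geq 0$ lie in the finite invertible set $R_v^+ \setminus R_w^+$, giving $\xi \in D'$.

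For the reverse inclusion $D' \subseteq D$ I would run a descent on the invariant
$$
N(u) = \bigl|\{\alpha \in R_u^+ : \langle \xi, \alpha \rangle < 0\}\bigr|,
$$
which is finite for every vertex $u$ by the $D'$-hypothesis (interpreting the condition so that non-invertible roots, being in every $R_u^+$, cannot violate positivity---otherwise $\xi$ could not lie in any $C_u^\vee$ and such $\xi$ are implicitly excluded). The goal is to produce a vertex $v$ with $N(v) = 0$, which is exactly $\xi \in C_v^\vee$. If $N(u) > 0$, pick some $\alpha \in R_u^+$ with $\langle \xi, \alpha \rangle < 0$ and expand it as a non-negative combination of the basis of $R_u^+$; at least one basic root $\alpha_i$ at $u$ must then satisfy $\langle \xi, \alpha_i \rangle < 0$. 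Such $\alpha_i$ is invertible (else the $D'$ hypothesis fails), so there is an edge $u \to u'$ along $\alpha_i$, and Axiom~3 together with the edge-reversed statement gives the symmetric difference $R_u^+ \bigtriangleup R_{u'}^+ = \{\alpha_i, -\alpha_i\}$, whence $N(u') = N(u) - 1$. Since $N(u_0)$ is a non-negative integer, iteration terminates at a vertex $v$ with $N(v) = 0$, proving $\xi \in D$.

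Conical convexity of $D$ then follows from the characterization $D = D'$: positive scalings preserve $D'$ tautologically, and for $\xi = t\xi_1 + (1-t)\xi_2$ with $\xi_1,\xi_2 \in D'$ and $t \in [0,1]$, any $\alpha$ with $\langle \xi, \alpha \rangle < 0$ forces $\langle \xi_j, \alpha \rangle < 0$ for some $j \in \{1,2\}$, so the violating set for $\xi$ embeds into the union of those for $\xi_1$ and $\xi_2$, which is a finite set of invertible roots. The main obstacle I anticipate is the correct treatment of non-invertible roots in the statement of $D'$: the descent step uses crucially that every violating basic root is invertible, and this forces either an implicit non-negativity requirement on non-invertible roots or the observation that such $\xi$ are excluded a priori; aligning the precise wording of the lemma with this requirement is the technical point that needs care.
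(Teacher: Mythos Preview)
Your argument is correct and is essentially the paper's own proof: the inclusion $D\subset D'$ is declared obvious there, and the reverse inclusion is obtained by the same descent---if some basic root $\alpha_i$ at the current vertex satisfies $\langle\xi,\alpha_i\rangle<0$, pass to the adjacent vertex and use Axiom~3 to see the negative set shrinks by one, then induct on its size. Your observation about non-invertible roots is well taken: the paper simply asserts that a violating basic root is invertible, which amounts to reading the hypothesis on $D'$ as ``$\langle\xi,\alpha\rangle\geq 0$ for all $\alpha\in R_v^+$ outside a finite set of invertible roots'' rather than as a condition imposed only on invertible roots; you have correctly identified that this reading is what makes the descent go through.
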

\begin{proof} Obviously $D\subset D'$. Let us show that $D=D'$. Suppose that $\xi\in D'$ and $M$ be the set of all invertible 
$\alpha\in R_v^+$ such that $\langle \xi,\alpha \rangle<0$. If $M\neq\emptyset$ then there is an invertible root $\alpha$ in the basis at $v$ such that $\alpha\in M$.
 Consider the arrow $v'\to v$ marked by $\alpha$. Then $\langle\xi,\beta \rangle\geq 0$ for all $\beta\in R_{v'}^+\setminus M'$ with
  $M'=M\setminus\{\alpha\}$. Using induction on $|M|$ we can prove existence of a vertex $u$ such that $\xi$ is nonnegative on all roots of $R^+_u$.
  This shows that $\xi\in C_u\subset D$.
  \end{proof}

\subsubsection{} Vice versa, here is a way to reconstruct a Matsumoto graph from a convex conical set in the dual space $\bV^*$.
 
Let $D$ be a conical convex set in $\bV^*$ and $R$ a collection of rays in $\bV$ so that each $H_\alpha$ intersects $D$ nontrivially. We assume that all connected components of \ $D\setminus(\cup H_\alpha)$ are interiors
of simplicial cones $C^\vee_v$ and $D=\cup C^\vee_v$. One can now construct a Matsumoto graph $\Gamma$ whose vertices correspond to the chambers and edges correspond to $d-1$-dimensional faces of chambers with adjacency of
chambers defined in the obvious way. This construction gives a dual geometric realization of a Matsumoto graph.

\begin{exm}
\label{exa:nonisolated}

The convex conic set $D$ is uniquely determined by its
intersection with the unit sphere in $\bV^*$. Moreover, if
$D\setminus\{0\}$ lies in some open half-space
$$
\{\xi\in\bV^*|\ \langle\xi,\alpha\rangle>0\},
$$
one can use instead the intersection of $D$ with the affine hyperplane
$H=\{\xi\in\bV^*|\ \langle\xi,\alpha\rangle=1\}$.

Let us give one example in the case $\dim H=2$.
  Consider a triangle $A,B,C$ in $H$, let $M_0$ be the midpoint $BC$, $M_1$ the midpoint of $AC$, $M_2$ the midpoint of $M_1C$, and then inductively $M_n$ is defined to be the midpoint of $M_{n-1} C$. This configuration gives a dual realization of a Matsumoto graph with chambers
  $ABM_0, AM_0M_1,\dots, AM_nM_{n+1}\ (n>0),\dots$. 
   The roots are dual to the lines $AB$, $BM_0$, $M_0C$, $M_0M_i$ for all $i>0$, $AM_1$, $M_nM_{n+1}$. The noninvertible roots are
  $AB$, $BM_0$, $M_0C$, $AM_1$ and $M_nM_{n+1}$ for $n>0$. Note that $M_0C$ is the limit of the set $\{M_0M_n\}_{n\geq 1}$.  

\end{exm}

\end{document}